\numberwithin{equation}{section}
\numberwithin{table}{section}
\numberwithin{figure}{section}
\newtheorem{theorem}{Theorem}[section]
\newtheorem{lemma}[theorem]{Lemma}
\newtheorem{conjecture}[theorem]{Conjecture}
\theoremstyle{definition}
\theoremstyle{definition}
\theoremstyle{definition}
\newtheorem{example}[theorem]{Example}
\newtheorem{question}[theorem]{Question}
\newcommand{\ZZ}{\mathbb Z}
\newcommand{\ie}{i.\,e.,}
\author{
    Anna M.\ Limbach%
    \thanks{Institute of Computer Science, Czech Academy of Sciences, Pod Vod\'{a}renskou v\v{e}\v{z}\'{i} 271/2, 182 00 Praha 8, Czech Republic, email: limbach@cs.cas.cz}\hspace{1.7ex}and 
    Martin Winter%
    \thanks{Mathematics Institute, Technische Universit\"at Berlin, Stra\ss{}e des 17.\ Juni 135, 10623 Berlin, Germany \newline  email: winter@math.tu-berlin.de. Martin Winter is supported as Dirichlet Fellow by the Berlin~Mathematics Research Center MATH+ and the Berlin Mathematical School.}
}
\title{When do graph covers preserve the clique dynamics of infinite graphs?}
\begin{document}

\date{\vspace{-2em}}
\maketitle

\begin{abstract}
    \noindent We investigate for which classes of (potentially infinite) graphs the clique dynamics is cover stable, \ie\ when clique convergence/divergence is preserved under triangular covering maps.

We first present an instructive counterexample:
a clique convergent graph which covers a clique divergent graph and which is covered by a clique divergent graph.
%
%
%
Based on this we then focus on local conditions (\ie\ conditions on the neighbourhoods of vertices) and show that the following are sufficient to imply cover stability:
local girth $\ge 7$ and local minimum degree $\ge 2$; being locally cyclic and of minimum degree $\ge 6$.


\end{abstract}

\noindent\textbf{Keywords:} clique dynamics, triangular cover, infinite graphs, local minimal degree, local girth.

\noindent\textbf{2010 Mathematics Subject Classification:} 05C69, 57Q15, 57M10, 37E25.

\section{Introduction} 

Given a possibly infinite but locally finite graph $G$ (\ie\  every vertex degree is finite), its \emph{clique graph} $kG$ is the intersection graph of its maximal complete subgraphs, called \emph{cliques}. This means the set of cliques of $G$ is the vertex set of $kG$ and two vertices of $kG$ are adjacent if and only if they intersect non-trivially as cliques. The operator $k$ that maps each graph to its clique graph is called the \emph{clique graph operator}. As the clique graph of a locally finite graph is locally finite, we can apply the operator iteratively to generate the sequence of iterated clique graphs $k^0G=G$, $kG$, $k^2(G)=k(kG)$, $k^3G$, ... . In the following, we are interested in the dynamics of this operator.

If for each pair of non-negative integers $m$ and $n$ the graphs $k^mG$ and $k^nG$ are non-isomorphic, $G$ is called \emph{clique divergent}; otherwise, \emph{clique convergent}. As $k^mG\cong k^nG$ implies $k^{m+1}G\cong k^{n+1}G$, for a clique convergent graph the sequence of iterated clique graphs starts with a finite pre-periodic phase before it starts to cycle on a finite number of graphs.

We want to describe the relation between clique dynamics and triangular covering maps for graphs. Hereby the graphs may be finite or infinite but locally finite. A graph homomorphism is called a \emph{triangular covering map} if its restriction to the closed neighbourhood of each vertex is a bijection or, equivalently, if two different preimages of the same vertex have always distance more than three. In this case, the source graph is called a \emph{triangular cover} of the target graph. The \emph{universal triangular cover} of a graph $G$ is the unique connected triangular cover of $G$ which covers every triangular cover of $G$.
Existence and uniqueness up to isomorphism are shown in \cite{BAUMEISTER2022112873}.

In \cite{larrion2000locally}, it was proven that for each triangular covering map between two finite graphs the source is clique convergent if and only if the target is. 

Their proof consists of three arguments. Firstly, a finite graph is clique divergent if and only if the number of vertices of its iterated clique graphs diverges (using the pigeonhole principle); secondly, 
the clique graph of a triangular cover of a graph $G$ is a triangular cover of $kG$; and thirdly, for a triangular covering map, the size of the fibre of any vertex has the same size and this number is also constant across the iterated clique graphs and their respective covers. Unfortunately, those arguments do not sensibly extend to settings in which not both the original graph and its cover are finite.

In fact, neither direction of the theorem generalizes to the non-finite setting. In \cref{sec:counterexample} we construct a clique convergent graph which both has a clique divergent triangular cover and is the triangular cover of a clique divergent graph. 

Still, the following theorem regarding universal covers remains true: 

\begin{theorem}[\cite{LIMBACH2024114144}]
     If a locally finite graph $G$ is clique convergent, so is its universal triangular cover.
\end{theorem}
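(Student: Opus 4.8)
The plan is to reduce the whole statement to a single commutation identity between the clique operator $k$ and the passage to the universal triangular cover. Writing $\widetilde H$ for the universal triangular cover of a connected locally finite graph $H$, I would prove by induction on $n$ that
\[
    k^n \widetilde G \;\cong\; \widetilde{k^n G}
    \qquad\text{for all } n \ge 0 .
\]
Granting this, the theorem is immediate. If $G$ is clique convergent, there are indices $m \ne n$ with $k^m G \cong k^n G$. Isomorphic base graphs have isomorphic universal triangular covers (an isomorphism of bases lifts to one of the covers by the uniqueness in \cite{BAUMEISTER2022112873}), so $\widetilde{k^m G} \cong \widetilde{k^n G}$, and the identity above then yields $k^m \widetilde G \cong k^n \widetilde G$. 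That is exactly clique convergence of $\widetilde G$. Here I use that $\widetilde G$ is connected by definition and locally finite, the latter because a triangular covering map is a bijection on each closed neighbourhood and hence preserves degrees.

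The base case $n=0$ is trivial, so the content lies in the inductive step, which I would isolate as the key lemma: for every connected locally finite graph $H$ one has $k\widetilde H \cong \widetilde{k H}$. Assuming it and setting $H = k^n G$, the inductive hypothesis $k^n\widetilde G \cong \widetilde H$ gives $k^{n+1}\widetilde G = k\bigl(k^n\widetilde G\bigr) \cong k\widetilde H \cong \widetilde{kH} = \widetilde{k^{n+1}G}$. To prove the lemma I would first invoke the result quoted from \cite{larrion2000locally}: since $\widetilde H$ is a triangular cover of $H$, its clique graph $k\widetilde H$ is a triangular cover of $kH$, and it is connected because $\widetilde H$ is. By the universal property of $\widetilde{kH}$ there is then a triangular covering map $\widetilde{kH} \to k\widetilde H$, so it remains only to show that this map is an isomorphism. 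For that it suffices to prove that $k\widetilde H$ is itself simply connected in the triangular sense, \ie\ admits no nontrivial connected triangular cover, since the universal cover is the unique simply connected connected triangular cover.

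The main obstacle is precisely this last point: transferring triangular simple connectivity from $\widetilde H$ to $k\widetilde H$. I would approach it through the triangular fundamental group $\pi_1^{\Delta}$, the fundamental group of the complex obtained from a graph by filling in every triangle, whose triviality characterizes the universal triangular cover. The goal is a natural isomorphism $\pi_1^{\Delta}(kH) \cong \pi_1^{\Delta}(H)$ valid for all connected locally finite $H$; since $\widetilde H$ is its own universal cover, $\pi_1^{\Delta}(\widetilde H)$ is trivial, forcing $\pi_1^{\Delta}(k\widetilde H)$ trivial and $k\widetilde H$ simply connected. To construct the isomorphism I would send a triangular loop in $kH$, \ie\ a cyclic sequence of pairwise intersecting cliques, to a loop in $H$ that passes through a chosen common vertex of each consecutive pair of cliques, and conversely assign to a loop in $H$ the sequence of cliques it visits. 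The delicate verification is that the elementary homotopies generating $\pi_1^{\Delta}$ on each side, namely backtrack cancellation and sliding across a filled triangle, are carried to compositions of elementary homotopies on the other; the genuinely hard case is a triangle of three mutually intersecting cliques in $kH$, which need not arise from a single triangle of $H$, so I expect to resolve it by subdividing the corresponding loop through intermediate cliques sharing the relevant vertices until every move becomes local. Establishing this correspondence carefully is the crux on which the entire argument rests.
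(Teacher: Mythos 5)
Your proposal is correct and follows essentially the same route as the paper: your key lemma $k\widetilde H\cong\widetilde{kH}$ is precisely the first of the two facts the paper invokes (the clique graph of a universal triangular cover is a universal triangular cover of the clique graph), and combining it with uniqueness of universal triangular covers yields the theorem exactly as you describe. The paper cites this commutation fact from \cite{LIMBACH2024114144} rather than proving it, so your further sketch of its proof via invariance of the triangular fundamental group goes beyond what the paper records; as you acknowledge, that invariance argument is the real content and remains to be carried out in detail.
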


This follows from two facts: the clique graph of a universal triangular cover is a universal triangular cover of the clique graph; and universal triangular covers are unique up to isomorphism. For a more in-depth discussion see  \cite{LIMBACH2024114144}.

Since in general the clique dynamics of a graph and of its triangular covers can be very different, a next step would be to look at restricted classes of graphs. 
We say that the clique dynamics of a graph class is \emph{cover stable} if clique convergence/divergence of a graph from the class implies clique convergence/divergence for each triangular cover.
In this article we specifically look at classes defined by local properties, \ie\ properties of vertex neighbourhoods. This is natural as those classes are closed under taking triangular covers.

First, we mention the class of locally cyclic graphs with minimum degree $6$ or higher. A graph is called \emph{locally cyclic} if all open neighbourhoods are cycles. In \cite{BAUMEISTER2022112873} and \cite{LIMBACH2024114144} it was shown that those graphs are clique convergent if and only if their universal triangular covers are, and the following characterization was given:

\begin{theorem}[\cite{LIMBACH2024114144}]\label{thm:loccyc}
    Let $(\Delta_k)_{k\in\mathbb{N}_0}$ be the family of graphs depicted in \cref{fig_pyramids}. 
    
    \begin{figure}[htbp]
	\centering
	\includegraphics[width=0.5\textwidth]{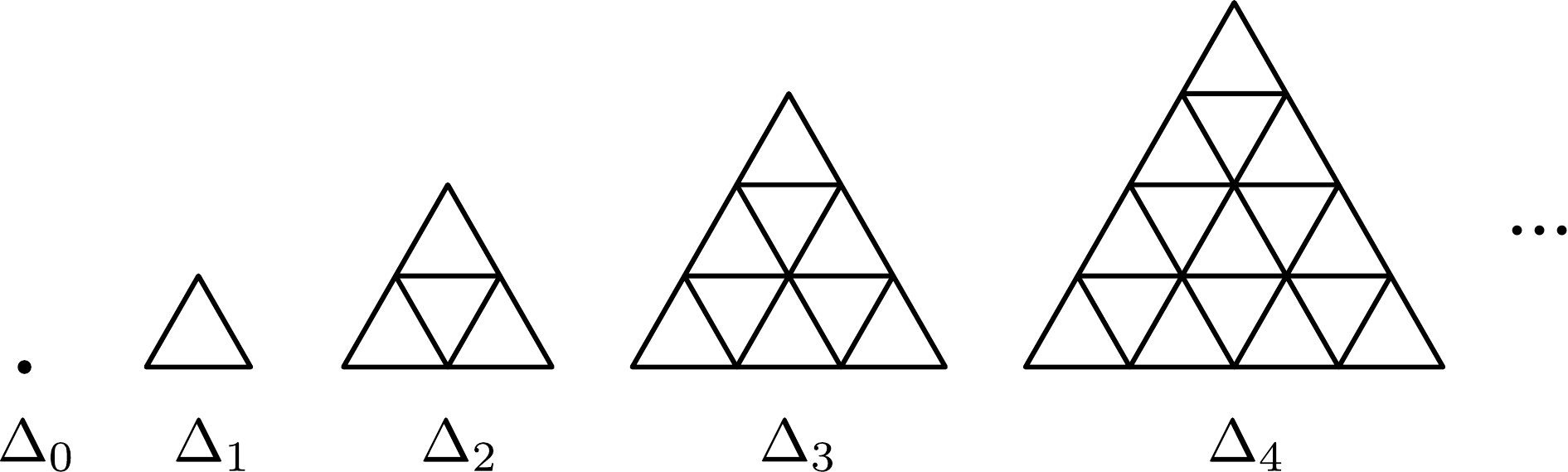}
	\caption{The graphs $\Delta_0$ to $\Delta_4$.}
	\label{fig_pyramids}
\end{figure}
    A locally cyclic graph $G$ with minimum degree $\delta\geq 6$ is clique divergent if and only if for each $k\in\mathbb{N}_0$, the graph $\Delta_k$ can be embedded into the universal triangular cover of $G$.
\end{theorem}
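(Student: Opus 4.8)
The plan is to push everything onto the universal triangular cover $\widetilde G$ of $G$. Since $G$ is locally cyclic with $\delta\ge 6$ and these are local properties, they are inherited by covers, so $\widetilde G$ is again locally cyclic with $\delta\ge 6$ but now simply connected: combinatorially it is a triangulated disc (or plane) all of whose interior vertices have degree at least $6$. By the equivalence quoted above---that within this class $G$ is clique convergent if and only if $\widetilde G$ is---it suffices to prove the characterisation for $\widetilde G$ in place of $G$; note that the universal triangular cover of $\widetilde G$ is $\widetilde G$ itself, so the embeddings of $\Delta_k$ are then sought directly in $\widetilde G$. Finally, $\Delta_k$ is exactly the flat triangular patch of side $k$ cut out of the degree-$6$ triangular lattice (see \cref{fig_pyramids}), so the condition ``$\Delta_k$ embeds into $\widetilde G$ for all $k$'' says precisely that $\widetilde G$ contains arbitrarily large flat regions.

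Next I would invoke combinatorial curvature. Assigning to each vertex $v$ the value $1-\deg(v)/6$, the hypothesis $\delta\ge 6$ forces every vertex to have non-positive curvature, with curvature zero exactly at the degree-$6$ vertices. A discrete Gauss--Bonnet argument then gives a clean dichotomy for $\widetilde G$: either it contains flat triangular patches of unbounded size (so all $\Delta_k$ embed), or the degree-$\ge 7$ vertices are uniformly dense, \ie\ there is an $N$ such that every ball of radius $N$ meets a vertex of degree $\ge 7$, in which case $\Delta_N$ cannot embed. The theorem asserts that the first alternative is equivalent to clique divergence, so the remaining task is to match each side of this dichotomy with the corresponding clique dynamics.

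For the direction ``all $\Delta_k$ embed $\Rightarrow$ clique divergent'', the key is to understand how the clique operator acts on a flat patch. Already around a single degree-$6$ vertex the six incident triangles pairwise intersect and hence form a $K_6$ in the clique graph, so $k$ immediately leaves the locally cyclic class; nonetheless a local computation should show that flat patches propagate under $k$ and that their size parameter grows. I would convert this into an \emph{expansive certificate}: using the nested family $(\Delta_k)$ I would exhibit, for every $n$, an embedded flat patch in $k^n\widetilde G$ whose side length grows with $n$, together with a monotone local invariant (e.g.\ the supremal side of an embedded flat triangle, recorded with its vertex count) that strictly increases under $k$. This makes the iterates $k^m\widetilde G$ and $k^n\widetilde G$ pairwise non-isomorphic for $m\ne n$, so $\widetilde G$ is clique divergent.

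For the converse I would argue by contraposition: if some $\Delta_N$ fails to embed then, by the dichotomy, $\widetilde G$ is uniformly negatively curved, and the goal is to prove it clique convergent. Here the uniform density of degree-$\ge 7$ vertices should make $k$ eventually contracting: negative curvature rules out flat regions, which are the only source of unbounded growth, so that the local isomorphism types occurring in $k^n\widetilde G$ are drawn from a finite list and a retraction/pigeonhole argument yields pre-periodicity; concretely I would seek a retraction of each $k^n\widetilde G$ onto a bounded-complexity core together with a bound, uniform in $n$, on the number of cliques in each ball. The main obstacle throughout is controlling the clique operator on these triangulations: because $k$ leaves the locally cyclic class at once, one needs a faithful combinatorial model of how $k$ transforms non-positively curved triangulations, and one must prove \emph{both} halves---expansion on flat patches and contraction on uniformly curved ones---inside that model. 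Pinning down the precise growth rate of flat patches (the expansive certificate) and the retraction in the curved case are the genuinely hard steps, whereas the curvature dichotomy and the reduction to $\widetilde G$ are comparatively routine.
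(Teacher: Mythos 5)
This theorem is quoted from \cite{LIMBACH2024114144}; the present paper contains no proof of it, so there is nothing internal to compare your attempt against. Judged on its own terms, your proposal is a reasonable roadmap --- the reduction to the universal triangular cover, the identification of the $\Delta_k$ as flat patches of the degree-$6$ triangular lattice, and the curvature dichotomy (no $\Delta_N$ embeds iff degree-$\ge 7$ vertices are uniformly dense) are all correct and are indeed the right geometric picture --- but it is not a proof. Both substantive implications are left as declared intentions: ``I would exhibit an expansive certificate'' and ``I would seek a retraction onto a bounded-complexity core'' name the goals without supplying the arguments, and you concede yourself that these are the genuinely hard steps. The central missing ingredient is a faithful combinatorial description of how $k$ (or $k^2$) acts on a simply connected, non-positively curved triangulation; without it neither the propagation of flat patches nor the contraction in the uniformly curved case can be verified, and the clique operator immediately leaves the locally cyclic class, so one cannot simply iterate the hypotheses.

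One step as written would actually fail. Your proposed divergence certificate --- ``the supremal side of an embedded flat triangle'' as a monotone invariant that strictly increases under $k$ --- is already infinite on $\widetilde G$ in the very case you are trying to handle (all $\Delta_k$ embed), so it cannot strictly increase and cannot distinguish $k^m\widetilde G$ from $k^n\widetilde G$. This is precisely the trap the introduction of this paper warns about: for infinite graphs, counting-type invariants that settle the finite case (vertex counts, pigeonhole) are unavailable, and one needs a genuinely different, locally computable invariant of the iterated clique graphs --- finite at each stage yet strictly growing --- together with a proof that it is an isomorphism invariant. Constructing such an invariant, and proving pre-periodicity in the uniformly negatively curved case, is the actual content of the cited theorem; your sketch locates these tasks correctly but does not carry either of them out.
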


By definition, the universal triangular cover is the same for a graph and any of its triangular covers. Thus, by \cref{thm:loccyc}, the clique dynamics is cover stable on the class of locally cyclic graphs with minimum degree at least $6$. 
Remarkably, even  though the class is defined by a local property, the characterization of clique dynamics depends on the global structure of the graphs in this case.

We move on to the class of graphs whose local girth (\ie\ the minimum girth of the open vertex neighbourhoods) is at least 7. 
In \cite{LARRION2002123}, it was shown that all finite graphs in this class are clique convergent.
The proof uses that for any such graph $G$, its clique graph $kG$ is \emph{clique-Helly}, that is, any set of pairwise intersecting cliques of $kG$ has a non-empty total intersection.
It is furthermore used that if $H$ is clique-Helly, then all iterated clique graphs of $H$ are themselves clique-Helly, and that $k^2H$ is an induced subgraph of $H$ \cite{escalante1973iterierte}. 
Both proofs generalize directly to the infinite case. 
Since for a finite graph $G$ there cannot be infinite chains of proper induced subgraphs, $H=kG$ (and thus $G$) are clique convergent with period length $1$ or $2$. 
For infinite graphs, however, those infinite chains can exist, and a different argument is necessary. 
In \cref{sec:cliquehelly}, we first construct clique-Helly graphs that are clique convergent of arbitrary period length, as well as clique divergent examples. We also note that clique-Hellyness is cover stable. 
The constructed graphs have infinite local girth, which shows that not all infinite graphs of local girth at least $7$ are clique convergent. 
We then show that imposing stricter constraints salvages the result (proven in \cref{sec:proofthmlocgirth}):

\begin{restatable}{theorem}{locgirth}\label{thm:locgirth}
    A locally finite graph with local girth at least $7$ and local minimum degree at least $2$ (\ie\ all open neighbourhoods have minimum degree at least $2$) is clique convergent.  
\end{restatable}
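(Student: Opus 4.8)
The plan is to prove the stronger statement $k^{3}G\cong kG$, so that the sequence of iterated clique graphs is periodic with period dividing $2$ from its first term on; this replaces the finite ``no infinite descending chain'' argument by an explicit identification of $k^2(kG)$ with $kG$. First I would extract the combinatorial structure forced by the two local hypotheses. Local girth $\ge 7$ makes every open neighbourhood triangle-free, so $G$ is $K_4$-free and every maximal clique has at most three vertices; local minimum degree $\ge 2$ makes every vertex of $N(v)$ have two neighbours inside $N(v)$, so every edge of $G$ lies in at least two triangles and no edge is maximal. Hence \emph{every maximal clique of $G$ is a triangle}, the vertices of $kG$ are precisely the triangles of $G$, and two of them are adjacent iff they share a vertex. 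Finally, each $N(v)$ has minimum degree $\ge 2$ and girth $\ge 7$, so it contains a cycle of length $\ge 7$; since a single vertex covers at most two edges of that cycle, no three vertices of $G$ can cover all edges of $N(v)$. I would package this as a \emph{cover lemma}: no triangle avoiding $v$ can meet every triangle through $v$, equivalently the vertex-star $S_v$ (the set of all triangles through $v$) is a maximal clique of $kG$.

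Next I would bring in the clique-Helly machinery. By \cite{LARRION2002123} (whose proof carries over to the locally finite setting, as noted above) the graph $H:=kG$ is clique-Helly, and by \cite{escalante1973iterierte} all iterated clique graphs of $H$ are clique-Helly with $k^2H$ an induced subgraph of $H$. Clique-Hellyness of $H$ gives more: every maximal clique of $kH=k^2G$ is a \emph{star} $C_T:=\{\,\text{maximal cliques of }H\text{ containing }T\,\}$ for a vertex $T$ of $H$, because a pairwise-intersecting family of maximal cliques of $H$ has a common vertex and is therefore contained in, hence equal to, such a star. The theorem thus reduces to the single claim that every star $C_T$ is itself maximal in $kH$: granting this, $T\mapsto C_T$ is a bijection $V(H)\to V(k^2H)$, and it preserves adjacency because two triangles lie in a common maximal clique of $H$ exactly when they share a vertex (here one uses that every edge lies in a triangle). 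Consequently $k^2H\cong H$, i.e.\ $k^3G\cong kG$.

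The heart of the argument --- and the step I expect to be the main obstacle --- is to show that no vertex $T=\{a,b,c\}$ of $H$ is removable, i.e.\ that $C_T$ is maximal. The difficulty is that maximal cliques of $H=kG$ need \emph{not} be vertex-stars $S_v$: the graph $G$ is itself not clique-Helly (three triangles arranged around a common triangle already pairwise intersect with empty total intersection), so a hypothetical clique witnessing removability cannot be analysed one vertex at a time. My plan is to resolve this by playing the cover lemma against the clique-Hellyness of $H$. Suppose $C_T$ were not maximal; then some maximal clique $D$ of $H$ with $T\notin D$ meets every maximal clique of $H$ containing $T$. By the cover lemma the three vertex-stars $S_a,S_b,S_c$ are maximal cliques of $H$, each contains $T$, and the four sets $D,S_a,S_b,S_c$ pairwise intersect (any two of the stars share $T$, and $D$ meets each by assumption). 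Clique-Hellyness of $H$ then produces a triangle $S\in D\cap S_a\cap S_b\cap S_c$; but a triangle in $S_a\cap S_b\cap S_c$ has $a,b,c$ as its three vertices, so $S=T$, forcing $T=S\in D$ --- a contradiction. Hence every $C_T$ is maximal and the reduction of the previous paragraph applies.

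Two remarks on where the hypotheses enter, which also explain why neither alone suffices. Minimum degree $\ge 2$ is exactly what guarantees the cycle in $N(v)$, without which the cover lemma (and thus maximality of $S_v$) fails --- this matches the divergent clique-Helly examples of \cref{sec:cliquehelly}, whose neighbourhoods are acyclic. Local girth $\ge 7$ is used three times: to make $G$ be $K_4$-free so that maximal cliques are triangles, to push the neighbourhood cycle past length $6$ so that three vertices cannot cover it, and to guarantee that $kG$ is clique-Helly in the first place.
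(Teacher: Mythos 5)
Your proof is correct, and while it lives in the same clique-Helly framework as the paper's (both start from the facts that all cliques of $G$ are triangles and that $kG$ is clique-Helly by \cite{LARRION2002123}, together with Escalante's results \cref{thm:esc1} and \cref{thm:esc2}), the decisive combinatorial lemma is genuinely different. The paper proves that no vertex of $kG$ dominates another: for any two distinct intersecting triangles $\Delta,\Delta'$ it constructs, by a case analysis on whether they share an edge or a single vertex, a third triangle meeting $\Delta'$ but not $\Delta$; it then invokes \cref{thm:esc2} to conclude $kG\cong k^3G$. You instead prove that every vertex-star $S_v$ is a maximal clique of $kG$, via the observation that $N(v)$ contains a cycle of length at least $7$ whose edges cannot be covered by three vertices, and then run a second Helly argument inside $kG$ (with the four maximal cliques $D,S_a,S_b,S_c$) to show that the stars $C_T$ are exactly the maximal cliques of $k^2(kG)$. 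Your cover lemma in fact implies the paper's non-domination lemma directly --- if $\Delta$ dominated $\Delta'=\{a,b,c\}$ with $a\notin\Delta$, then $\Delta$ would be a triangle avoiding $a$ that meets every triangle through $a$ --- so you could have shortened the argument by feeding it straight into \cref{thm:esc2}; conversely, your route has the merit of exhibiting the isomorphism $k^3G\cong kG$ explicitly rather than through the general subgraph description. One small point to patch: maximality of all the $C_T$ gives surjectivity of $T\mapsto C_T$ onto the vertices of $k^2(kG)$, but injectivity (equivalently, that no two distinct triangles mutually dominate in $kG$) needs the extra one-line observation that $S_a\cap S_b\cap S_c=\{T\}$; the computation is already present in your final step, it just needs to be invoked there as well.
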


As local properties are cover stable, this class is closed under triangular covers and trivially, the clique dynamics is cover stable on it.
The following question currently remains open:

\begin{question}
    Is the clique dynamics of graphs of local girth $\ge 6$ and local minimum degree $\ge 2$ cover stable? 
\end{question}

\section{A Counterexample to Cover Stability of Clique Dynamics}\label{sec:counterexample}

In this section we construct a graph $T$ that is itself clique convergent, yet has a clique divergent cover as well as covers a clique divergent graph. This shows that in general the clique dynamics needs neither be upwards nor downwards cover stable. 

\textit{Constructing $T$:}
Start with the infinite 3-regular tree (\cref{fig:TCG} (a)).
Choose a 1-factor $E$ in $T$ (\ie\ a set of pair-wise non-adjacent edges that cover all vertices of $T$). The complement of $E$ consists of infinitely many disjoint two-way infinite paths, any two of which are connected by at most one edge from $E$ (\cref{fig:TCG} (b)).
On each such path choose a $\mathbb Z$-edge labelling in the natural way (\ie\ choose an origin and a direction on the path and label the edges consecutively with $...,-2,-1,0,+1,+2,...$) subject to the following constraint: whenever two path $P_1$ and $P_2$ are connected by an edge $e\in E$, then the vertices $P_1\cap e$ and $P_2\cap e$ are incident to edges with the same labels (\cref{fig:TCG} (c)).
Next, subdivide each labelled edge $f$ with a new vertex and attach to this vertex a path of length $\max\{0,n\}$, where $n$ is the label on $f$ (\cref{fig:TCG} (d)).
Finally, replace each edge in $E$ by two parallel paths of length four (forming an 8-cycle) (\cref{fig:TCG} (e)).
The resulting graph is $T$.

\begin{figure}
    \centering
    (a)\quad\includegraphics[width=0.2\linewidth]{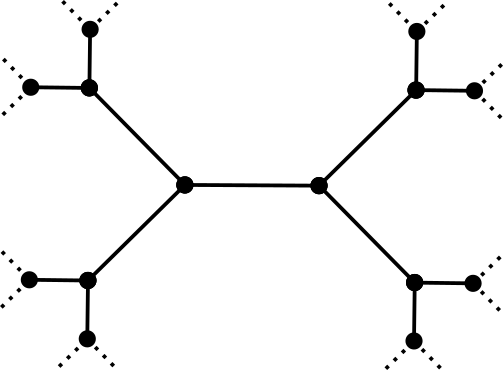}\qquad
    (b)\quad\includegraphics[width=0.2\linewidth]{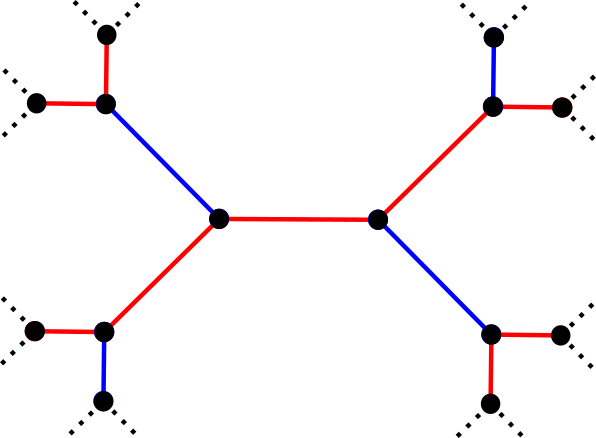}\qquad
    (c)\quad\includegraphics[width=0.24\linewidth]{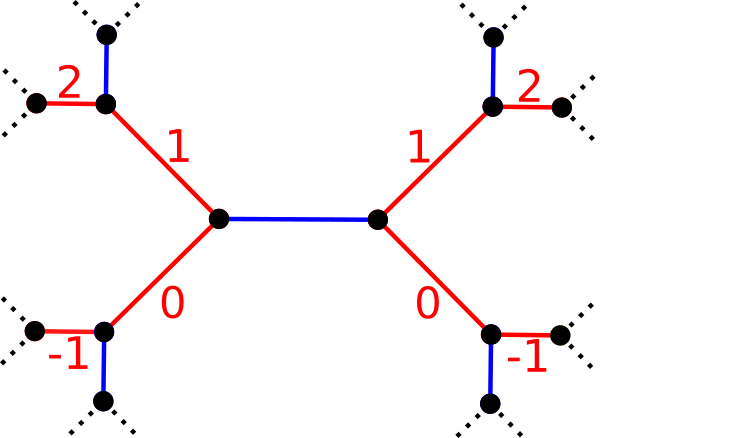}\\[4mm]
    (d)\quad\includegraphics[width=0.3\linewidth]{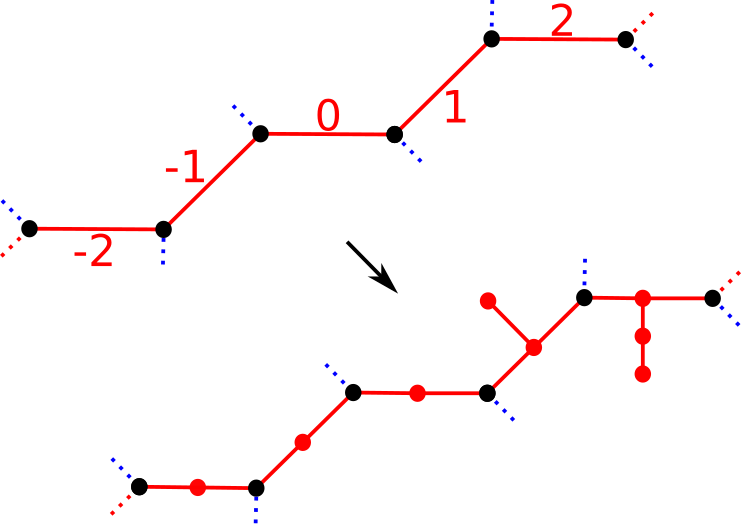}\qquad
    (e)\quad\includegraphics[width=0.3\linewidth]{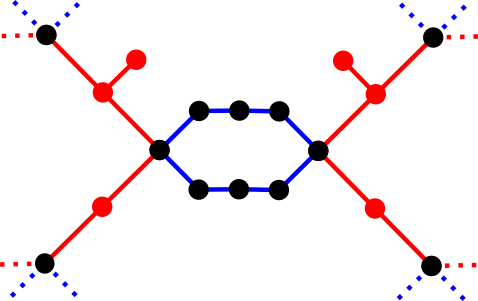}
    \caption{(a) $3$-regular tree, (b) labelling a path and colouring edges blue, (c) labelling a second path, (d) replacing a blue edge by two paths, (e) splitting labelled edges and appending paths.}
    \label{fig:TCG}
\end{figure}

\textit{Clique convergence of $T$:}
Since $T$ does not contain triangles, the second clique graph $k^2T$ is obtained by deleting all vertices of degree 1. These are precisely the end vertices of the attached paths. Note that this is equivalent to subtracting one from each edge-label, which is a symmetry operation of $T$. Thus, the graphs $T$ and $k^2T$ are isomorphic and $T$ is clique convergent.


\textit{A clique divergent quotient:}
Choose an $8$-cycle $C$ in $T$.
By construction $C$ contains precisely two vertices $v_1,v_2$ of degree four.
Let $T_i$ be the component of $T-E(C)$ that contains $v_i$.
Observe that $T_1$ and $T_2$ are isomorphic.
Hence, the fixed-point free symmetry of $C$ that rotates $v_1$ onto $v_2$ (and $v_2$ onto $v_1$) extends to a fixed-point free symmetry $\phi$ of $T$ that exchanges $T_1$ and $T_2$.
In other words, $T$ is a cover of $T':=T/\phi$.
Since there are still no triangles in $T'$, the $2n$-th clique graph $k^{2n}T'$ is obtained from $T'$ by decreasing all edge labels by $n$ (\ie\ shortening all attached paths by $n$).
However, since there is a distinguished 4-cycle, this is not a symmetry of $T'$. In fact, the distance of the 4-cycle from the edge with label one (eventually) increases with each application of the clique operator. Hence $T'$ is clique divergent.

\textit{A clique divergent cover:}
Start from a 16-cycle $C'$ and attach a copy of $T_1$ (from the last paragraph) to the vertices 0, 4, 8 and 12.
The fixed-point free symmetry of $C'$ that maps $0\to 8$ and $4\to12$ extends to a symmetry $\psi$ of $T''$.
Factoring out $\psi$ turns the 16-cycle into an 8-cycle, and we find $T\cong T''/\psi$.
As before, $k^{2n}T''$ decreases the edge labels by $n$, which is not a symmetry of $T''$ since it (eventually) increase the distance of the 16-cycle from the edge with label one. We conclude that $T''$ is clique divergent.



\section{Clique-Hellyness}\label{sec:cliquehelly}

One of the first classes of finite graphs shown to be clique convergent is the class of finite \emph{clique-Helly} graphs, see \cite{escalante1973iterierte}. 
A graph is called clique-Helly, if its set of cliques fulfils the \emph{Helly condition}, \ie\ if a set of cliques is pairwise intersecting, then it has a non-empty total intersection. 
A~vertex $v$ is said to \emph{dominate} a vertex $w$ if the closed neighbourhood of $w$ is a subset of the closed neighbourhood of $v$ (and note that $v$ and $w$ are necessarily adjacent).

The classical proof for convergence of clique-Helly graphs has the following main ingredients:

\begin{theorem}[\cite{escalante1973iterierte}, Satz 1]\label{thm:esc1}
    If $G$ is clique-Helly so is $kG$. 
\end{theorem}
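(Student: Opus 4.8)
The plan is to show that, in a clique-Helly graph $G$, every clique of $kG$ arises from a single vertex of $G$, and then to read off the Helly property of $kG$ from this description. For a vertex $v\in V(G)$ write $\mathcal C_v$ for the set of all cliques of $G$ that contain $v$. Since any two members of $\mathcal C_v$ share the vertex $v$, they are adjacent in $kG$, so $\mathcal C_v$ is a complete subgraph of $kG$.

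First I would characterize the cliques of $kG$. A complete subgraph of $kG$ is by definition a family of pairwise intersecting cliques of $G$, and a clique of $kG$ is a maximal such family. Let $\mathcal Q$ be any clique of $kG$. As the cliques in $\mathcal Q$ are pairwise intersecting, the Helly property of $G$ yields a vertex $v$ contained in every member of $\mathcal Q$; hence $\mathcal Q\subseteq\mathcal C_v$. Since $\mathcal C_v$ is itself a complete subgraph of $kG$ and $\mathcal Q$ is maximal, this forces $\mathcal Q=\mathcal C_v$. Thus every clique of $kG$ has the form $\mathcal C_v$ for some $v\in V(G)$.

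With this in hand I would verify the Helly condition for $kG$ directly. Take a pairwise intersecting family of cliques of $kG$ and write its members as $\mathcal C_{v_i}$, $i\in I$, using the characterization above. For $i\neq j$ the intersection $\mathcal C_{v_i}\cap\mathcal C_{v_j}$ is non-empty, so some clique of $G$ contains both $v_i$ and $v_j$; in particular $v_i$ and $v_j$ are adjacent (or equal). Hence the vertex set $\{v_i:i\in I\}$ is a complete subgraph of $G$, and it therefore extends to some clique $Q$ of $G$ (a finite complete subgraph in the locally finite setting, or via Zorn's lemma in general). This $Q$ contains each $v_i$, so $Q\in\mathcal C_{v_i}$ for every $i$, exhibiting $Q$ as a common member of the whole family. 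This establishes the Helly property for $kG$.

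The substantive step is the characterization of the cliques of $kG$ as the sets $\mathcal C_v$; this is precisely where the hypothesis that $G$ is clique-Helly enters, through the application of the Helly property to the pairwise intersecting family $\mathcal Q$. The rest is purely combinatorial. The only point needing mild care is that a pairwise intersecting family of cliques of $kG$ may be infinite, but since pairwise intersection of the $\mathcal C_{v_i}$ translates into pairwise adjacency of the $v_i$, the set $\{v_i:i\in I\}$ is a complete subgraph and so lies in a single clique of $G$, which causes no difficulty.
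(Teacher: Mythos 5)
Your argument is correct: the characterization of the cliques of $kG$ as the stars $\mathcal C_v$ (via the Helly property applied to a maximal pairwise-intersecting family) and the reduction of the Helly condition in $kG$ to completeness of $\{v_i : i\in I\}$ in $G$ is exactly the classical proof of Escalante's Satz~1, which the paper cites without reproducing. You also correctly handle the only point where infiniteness could intrude (extending a possibly infinite complete subgraph to a clique), which is precisely why the paper can assert that the proof ``generalises directly to infinite graphs.''
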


\begin{theorem}[\cite{escalante1973iterierte}, Satz 2]\label{thm:esc2}
    For each clique-Helly graph $G$, $k^2G$ is isomorphic to an induced subgraph of $G$. This subgraph can be obtained by deleting all strictly dominated vertices and contracting each set of mutually dominating vertices into one vertex. 
\end{theorem}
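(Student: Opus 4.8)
The plan is to set up an explicit dictionary between the domination relation in $G$ and the incidence of vertices with cliques, and then to read off the vertices and edges of $k^2G$ directly from this dictionary, with the Helly property entering at exactly one point. For a vertex $v$ of $G$ write $\mathcal{C}(v)$ for the set of all cliques of $G$ containing $v$; these cliques pairwise intersect (they share $v$), so $\mathcal{C}(v)$ is a complete subgraph of $kG$. The first step is the \emph{domination dictionary}: for vertices $v,w$ one has $N[w]\subseteq N[v]$ if and only if $\mathcal{C}(w)\subseteq\mathcal{C}(v)$. One direction is immediate from maximality of cliques: if $N[w]\subseteq N[v]$ and $Q$ is a clique containing $w$, then every vertex of $Q$ lies in $N[w]\subseteq N[v]$, so $Q\cup\{v\}$ is complete and maximality forces $v\in Q$. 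For the converse, given any $u\in N[w]$ extend $\{w,u\}$ to a clique $Q\in\mathcal{C}(w)\subseteq\mathcal{C}(v)$; then $v\in Q$ gives $u\in N[v]$. In particular $v$ and $w$ mutually dominate iff $N[v]=N[w]$ iff $\mathcal{C}(v)=\mathcal{C}(w)$, and $w$ is strictly dominated iff $\mathcal{C}(w)\subsetneq\mathcal{C}(u)$ for some $u$.

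The second step identifies the vertices of $k^2G$, \ie\ the cliques of $kG$, and is the only place the hypothesis is used. Let $\mathcal{F}$ be a clique of $kG$, that is, a maximal family of pairwise intersecting cliques of $G$. By the Helly property $\mathcal{F}$ has a common vertex $v$, whence $\mathcal{F}\subseteq\mathcal{C}(v)$; as $\mathcal{C}(v)$ is complete in $kG$, maximality of $\mathcal{F}$ gives $\mathcal{F}=\mathcal{C}(v)$. Thus every clique of $kG$ is of the form $\mathcal{C}(v)$. It remains to decide for which $v$ the complete subgraph $\mathcal{C}(v)$ is actually maximal: if $\mathcal{C}(v)$ is not maximal it extends to a clique of $kG$, which by the previous line equals some $\mathcal{C}(u)$, so $\mathcal{C}(v)\subsetneq\mathcal{C}(u)$ and $v$ is strictly dominated; conversely $\mathcal{C}(v)\subsetneq\mathcal{C}(u)$ shows $\mathcal{C}(v)$ is not maximal. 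Hence the cliques of $kG$ are exactly the sets $\mathcal{C}(v)$ with $v$ not strictly dominated, two such coinciding precisely when the corresponding vertices mutually dominate. This already yields the vertex set of $k^2G$: one vertex for each mutual-domination class of non-strictly-dominated vertices of $G$, which is precisely the effect of deleting strictly dominated vertices and contracting each set of mutually dominating vertices.

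The final step matches the edges. Two cliques $\mathcal{C}(v),\mathcal{C}(w)$ are adjacent in $k^2G$ iff they intersect, \ie\ iff some clique of $G$ contains both $v$ and $w$, which happens iff $\{v,w\}$ is complete, that is iff $v=w$ or $v\sim w$ in $G$. Choosing one representative from each mutual-domination class of non-strictly-dominated vertices, the assignment $\mathcal{C}(v)\mapsto v$ is therefore a bijection from $V(k^2G)$ to this set of representatives under which adjacency in $k^2G$ corresponds exactly to adjacency in $G$; hence $k^2G$ is isomorphic to the induced subgraph of $G$ on the representatives. A small side check, using $N[v]=N[v']$ for mutually dominating $v,v'$, confirms that adjacency between classes is independent of the chosen representatives, which is what makes the operation a genuine contraction.

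I expect the main obstacle to be isolating precisely where the clique-Helly hypothesis is needed and recognizing that it is used only to guarantee that every clique of $kG$ descends to a single common vertex (the second step); the domination dictionary and the edge computation are then purely formal. I would also take care that the argument remains valid for infinite graphs: since $G$ is locally finite, all cliques of $G$ and hence all complete subgraphs of $kG$ are finite, so the Helly property is invoked only for finite pairwise intersecting families and no compactness issue arises.
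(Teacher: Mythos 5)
The paper does not prove this statement at all: it is quoted as Satz~2 of Escalante's 1973 paper, and the authors only remark that ``both proofs do not use finiteness and, thus, generalise directly to infinite graphs.'' So there is nothing in the paper to compare against line by line; your proposal supplies the argument the paper leaves to the literature, and it is correct. It is also essentially the classical argument: the dictionary $N[w]\subseteq N[v] \Leftrightarrow \mathcal{C}(w)\subseteq\mathcal{C}(v)$, the identification (via the Helly property) of the cliques of $kG$ with the stars $\mathcal{C}(v)$ of non-strictly-dominated vertices, and the observation that $\mathcal{C}(v)\cap\mathcal{C}(w)\neq\emptyset$ exactly when $v$ and $w$ are equal or adjacent. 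Your closing remark is the one point of genuine added value relative to a textbook citation: you pinpoint that in the locally finite setting every complete subgraph of $kG$ is finite, so the Helly property and the extension of complete subgraphs to cliques never require any compactness or choice argument --- which is precisely the justification behind the paper's unproved claim that Escalante's proof carries over to infinite graphs.
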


Both proofs do not use finiteness and, thus, generalise directly to infinite graphs. While a chain of induced subgraphs of a finite graph stabilizes eventually with period length 1 or 2, this does not hold true for infinite graphs. The following examples provide infinite clique-Helly graphs of any finite period length as well as clique divergent ones.

\begin{example}
    It is easy to see that applying the clique operator twice to the graph $H_0$ in \cref{fig_retraction} is equivalent to deleting its vertices of degree $1$. But this deletion is equivalent to a right shift which is an isomorphism between $H_0$ and $k^2H_0$. By deleting the legs of $H_0$ in a periodic way, we can construct graphs of arbitrary even period length.

    \begin{figure}[htbp]
	\centering
	\quad\includegraphics[width=0.2\textwidth]{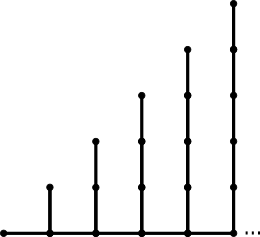}\qquad
	\caption{The graph $H_0$.} 
	\label{fig_retraction}
    \end{figure}

    Analogously, but slightly more complicated, it can be seen that for the graph $H_1$ from \cref{fig_retraction_3} (a) applying the clique graph operator once is equivalent to shifting right and flipping top and bottom.  By deleting the legs of $H_1$ in a periodic way, we can construct graphs of arbitrary period length. For example the graph $H_3$ in \cref{fig_retraction_3} (b) has period length $3$.

    From both $H_0$ and $H_1$ we can obtain clique divergent graphs by deleting legs in a non-periodic way.
\end{example}

   \begin{figure}[htbp]
	\centering
    (a)\includegraphics[width=0.25\textwidth]{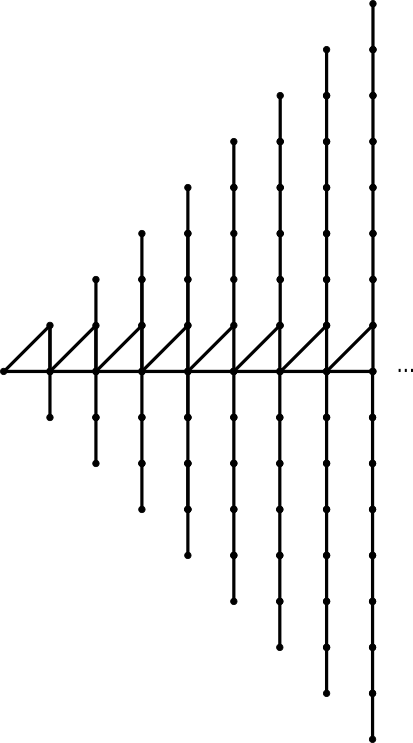}\qquad\qquad
	(b)\includegraphics[width=0.25\textwidth]{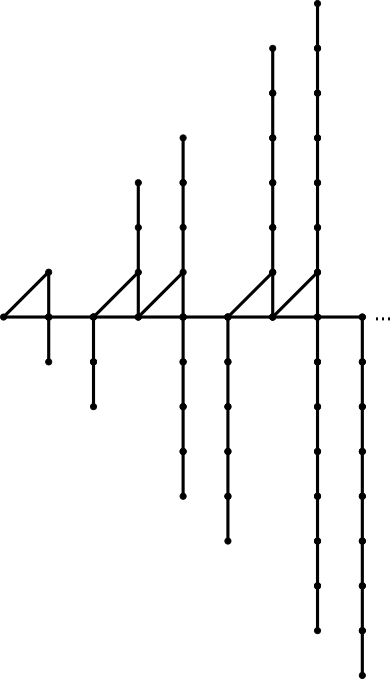}
	\caption{The graphs (a) $H_1$ and (b) $H_3$.}
	\label{fig_retraction_3}
\end{figure}

\section{Proof of \cref{thm:locgirth}}\label{sec:proofthmlocgirth}
We now prove the aforementioned \cref{thm:locgirth}:

\locgirth*

We proceed close to the proof of \cite[Proposition 18]{LARRION2002123}.
The following theorem is stated in \cite{LARRION2002123} for finite graphs, but the proof does not require finiteness.

\begin{theorem}[\cite{LARRION2002123}]
    If a graph $G$ has local girth at least 7, its clique graph $kG$ is clique-Helly.
\end{theorem}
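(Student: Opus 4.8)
The plan is to reduce the (infinite) Helly property of $kG$ to a purely local condition on triples of cliques, and then to verify that condition by showing that any violation forces a short cycle in some open neighbourhood of $G$, contradicting local girth $\ge 7$. First I would record the structural consequences of the hypothesis: since every open neighbourhood has girth $\ge 7$ it is in particular triangle-free, so $G$ contains no $K_4$ and every clique of $G$ has at most three vertices (a vertex, an edge, or a triangle). I would also note that every clique of $kG$ is a \emph{finite} set of cliques of $G$: fixing one member $Q_0$ (which has at most three vertices), every other member meets $Q_0$ and hence contains one of its at most three vertices, and by local finiteness only finitely many cliques pass through a given vertex.

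Next I would invoke the classical characterisation of clique-Helly graphs in terms of extended triangles (Szwarcfiter): a graph satisfies the Helly property for finite families of cliques if and only if for every triangle $T$ the \emph{extended triangle} $\hat T$ -- the subgraph induced by all vertices adjacent to at least two vertices of $T$ -- possesses a universal vertex. Since $G$ is locally finite, every extended triangle of $kG$ involves only finitely many cliques and is therefore finite, so this criterion applies to $kG$. The heart of the proof is to establish, for three pairwise intersecting cliques $A,B,C$ of $G$ (a triangle in $kG$), that their extended triangle in $kG$ has a universal vertex. Writing $\mathcal E$ for the family of cliques of $G$ meeting at least two of $A,B,C$, I would split into two cases according to whether $A\cap B\cap C$ is empty.

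If $A\cap B\cap C\neq\emptyset$, I claim one of $A,B,C$ is already universal in $\mathcal E$. A clique of $\mathcal E$ failing to meet $A$ must meet both $B$ and $C$ and avoid the common vertex $v$; picking such ``avoiders'' for each of $A,B,C$ simultaneously produces, inside $N(v)$, three cross edges joining the three cliques together with at most three within-clique edges, i.e.\ a closed walk of length at most $6$ running through $A,B,C$ and back. This closes up to a cycle of length between $3$ and $6$ in $N(v)$, contradicting girth $\ge 7$; hence no such triple of avoiders exists and some $X\in\{A,B,C\}$ meets every member of $\mathcal E$. If instead $A\cap B\cap C=\emptyset$, then choosing $p\in A\cap B$, $q\in B\cap C$, $r\in A\cap C$ forces $p,q,r$ distinct and pairwise adjacent, so $\{p,q,r\}$ is a triangle and, by $K_4$-freeness, itself a clique of $G$ lying in $\mathcal E$. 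I would show $\{p,q,r\}$ is universal: any $D\in\mathcal E$ missing $\{p,q,r\}$ would have to meet two of $A,B,C$ through their ``apex'' vertices (the third vertex of a triangular clique), and a short computation shows the resulting edges give a $4$-cycle in the neighbourhood of the shared corner, again contradicting girth $\ge 7$.

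Finally, I would upgrade from finite families to arbitrary families: since every clique of $kG$ is a finite set, a pairwise intersecting family of cliques of $kG$ has a common element as soon as each of its finite subfamilies does, and the latter is exactly the finite Helly property just established. The step I expect to be the main obstacle is the common-vertex case: here the obstruction is a $6$-cycle rather than a shorter one, so the full strength of the hypothesis (no cycles of length $3,4,5,6$) is used, and one must handle the degenerate configurations in which the six vertices of the closed walk partially coincide, checking that a genuine cycle of length $\le 6$ (rather than a backtracking walk) always survives.
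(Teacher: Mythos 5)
The paper does not actually prove this statement: it cites \cite{LARRION2002123} and only remarks that the proof given there for finite graphs goes through without finiteness. So there is no in-paper argument to contrast yours with; what you propose is a sound, essentially standard reconstruction (extended-triangle criterion plus the observation that a violation forces a cycle of length at most $6$ inside an open neighbourhood), and I believe it is correct. Two comments on the points you flag as delicate. First, your reduction from arbitrary to finite families is exactly where local finiteness enters and it does work: every clique of $kG$ is a finite set, so the intersections of finite subfamilies with one fixed member form a downward-directed family of nonempty subsets of a finite set, whose minimal element lies in every member of the family. Note also that you only need the sufficiency direction of the extended-triangle criterion, whose minimal-counterexample proof applies verbatim to finite families of (finite) cliques in an infinite graph. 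Second, the deferred degenerate-case analysis in the case $v\in A\cap B\cap C$ does close up, and it is worth recording why: if $D_A,D_B,D_C$ lie in the extended triangle with $D_X\cap X=\emptyset$, pick contact vertices $a_B,a_C\in A$, $b_A,b_C\in B$, $c_A,c_B\in C$ (all in $N(v)$, with $x_Y\in D_Y$), and arrange them in the cyclic order $a_C,a_B,c_B,c_A,b_A,b_C$ so that consecutive pairs lie either in a common clique among $A,B,C$ or in a common $D_X$ and are therefore adjacent or equal. The conditions $D_X\cap X=\emptyset$ forbid the identification of every \emph{non-consecutive} pair of this hexagon; consequently only consecutive vertices may coincide, no vertex can coincide with both of its neighbours, and the closed walk always contracts to a genuine cycle of length $6-j$ with $0\le j\le 3$ in $N(v)$, contradicting local girth $\ge 7$. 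With that check written out (and the analogous, easier check in the case $A\cap B\cap C=\emptyset$, where the obstruction is a triangle or a $4$-cycle in the neighbourhood of a shared vertex), your proof is complete.
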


Together with \cref{thm:esc1} and \cref{thm:esc2}, the following lemma implies that $kG\cong k^3G$ proving \cref{thm:locgirth}.

\begin{lemma}
     If a graph $G$ has local girth at least 7, and local minimal degree at least 2, no vertex of $kG$ dominates another vertex from $kG$. 
\end{lemma}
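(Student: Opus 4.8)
The plan is to argue by contradiction and to reduce the (closed-neighbourhood) domination in $kG$ to an elementary statement about the link of a single vertex. Recall that the vertices of $kG$ are the maximal cliques of $G$ and that two of them are adjacent exactly when they share a vertex. The first observation I would record is structural: since each open neighbourhood has girth at least $7$, it is in particular triangle-free, so $G$ contains no $K_4$ and every clique has at most three vertices; moreover every triangle is already maximal. Combined with local minimum degree at least $2$, the link $N(w)$ of any vertex $w$ is triangle-free and has no isolated vertex, so its maximal cliques are exactly its edges. Consequently, \emph{every maximal clique of $G$ through $w$ is a triangle $\{w,a,b\}$ with $\{a,b\}$ an edge of $N(w)$}; this dictionary between triangles through $w$ and edges of the link is what drives the whole argument.

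Now suppose, for contradiction, that a clique $Q$ dominates a clique $Q'$ in $kG$, with $Q\neq Q'$. Since both are maximal, neither contains the other and $Q\cap Q'\neq\emptyset$; I would fix a vertex $w\in Q'\setminus Q$. The goal is to exhibit a clique $R$ with $w\in R$ and $R\cap Q=\emptyset$: such an $R$ meets $Q'$ (in $w$) but neither equals $Q$ nor meets $Q$, contradicting $N_{kG}[Q']\subseteq N_{kG}[Q]$. By the dictionary above it suffices to find an edge $\{a,b\}$ of the link $N(w)$ avoiding the set $S:=Q\cap N(w)$, because then $R=\{w,a,b\}$ satisfies $R\cap Q\subseteq\{a,b\}\cap S=\emptyset$ (as $w\notin Q$ and $\{a,b\}\subseteq N(w)$). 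A short case check shows $1\le|S|\le 2$: the common vertex of $Q$ and $Q'$ lies in $S$, while $|S|=3$ would place a triangle of $Q$ inside $N(w)$ and create a $K_4$.

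The heart of the argument is therefore the following purely local claim: in a graph $H$ of minimum degree at least $2$ and girth at least $7$, every set of at most two vertices is avoided by some edge. I would prove this by contradiction. If every edge of $H$ met $S=\{u,u'\}$, then each vertex $a\notin S$ would have all of its (at least two) neighbours inside $S$, forcing $a$ to be adjacent to both $u$ and $u'$. Since $H$ has girth at least $7$ it contains a cycle, hence at least two vertices outside $S$, say $a_1,a_2$; but then $u\,a_1\,u'\,a_2\,u$ is a $4$-cycle, contradicting girth at least $7$. The case $|S|=1$ is even easier, since any vertex other than $u$ has a neighbour different from $u$.

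I expect the main obstacle to be neither the contradiction setup nor the link dictionary, but arranging for the local claim to use both hypotheses at once: it is precisely the combination of minimum degree $\ge 2$ (so that every link vertex lies on an edge, $R$ may be taken to be a triangle, and no leaf blocks the construction) and girth $\ge 7$ (forbidding the $C_4$ through which two vertices of $Q$ could otherwise absorb every incident edge) that makes the avoidance possible. I would also be careful with the bookkeeping that $R\cap Q$ can only be contained in $N(w)\cap Q$, which is what upgrades the link-level avoidance of $S$ to the genuine disjointness $R\cap Q=\emptyset$ in $G$.
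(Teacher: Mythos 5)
Your proof is correct and is essentially the paper's argument: both first show that every maximal clique is a triangle, and then, for two intersecting triangles $\Delta,\Delta'$, produce a third triangle through a vertex of $\Delta'\setminus\Delta$ that misses $\Delta$. The only real difference is packaging --- the paper splits into the cases $|\Delta\cap\Delta'|=2$ and $|\Delta\cap\Delta'|=1$ and constructs the witness separately in each, whereas you unify both via the single link-level claim that a graph of minimum degree $\ge 2$ and girth $\ge 7$ has an edge avoiding any prescribed set of at most two vertices. One small repair: the justification ``since $H$ has girth at least $7$ it contains a cycle'' is backwards, as girth $\ge 7$ alone does not force a cycle to exist (forests have infinite girth); the cycle comes from $H=N(w)$ being finite (local finiteness of $G$) with minimum degree $\ge 2$, and the girth bound then makes that cycle long enough to supply the two vertices $a_1,a_2$ outside $S$.
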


\begin{proof}
    We follow the line of arguments from \cite[Proposition 18]{LARRION2002123}.
    As $G$ has local girth at least $7$, it does not contain any $K_4$, and as the local minimal degree is at least $2$, each edge lies in at least two triangles, so all cliques of $G$ consist of exactly three vertices.
    Thus, let $\Delta$ and $\Delta'$ be two distinct triangles of $G$. If they do not intersect in at least one vertex, they are no neighbours in $kG$ and thus, they cannot dominate each other.
    Let's assume they intersect in an edge, say $\Delta=\{a,b,c\}$ and $\Delta'=\{a,b,c'\}$. As the local girth is at least $7$ and the local minimum degree is at least two, each edge in the induced neighbourhood of a vertex lies on at least one cycle of length at least 7. Thus, the neighbourhood of $c'$ contains at least a pair of adjacent vertices $d$ and $e$ apart from $a$ and $b$.
    The triangle $\Delta''=\{c',d,e\}$ intersects $\Delta'$ in $c'$ but $\Delta\cap \Delta''=\emptyset$, as otherwise $c\in \{d,e\}$ and  thus $\{a,b,c,c'\}$ would be a $K_4$.
    Now lets assume that $\Delta$ and $\Delta'$ intersect in exactly one vertex, say $\Delta=\{a,b,c\}$ and $\Delta'=\{a,b',c'\}$. Since each edge is contained in at least two triangles, there is a triangle 
    $\Delta''=\{a',b',c'\}$. Again, $\Delta\cap \Delta''=\emptyset$ as otherwise $a'\in\{b,c\}$ and $\{a, a', b',c'\}$ were a $K_4$.
\end{proof}

In light of the examples given in \cref{sec:cliquehelly}, and motivated from the study of local properties, we close with the following question:

\begin{question}
    If a class of graphs defined by local properties contains connected clique convergent graphs of arbitrarily large period, does it also contain a connected clique divergent graph?
\end{question}

We impose connectedness because otherwise the answer is trivially yes: the disjoint union of infinitely many clique convergent graphs with different periods is clique divergent. 

\bibliography{bibliography}
\bibliographystyle{plain}

\strictpagecheck
\checkoddpage
\ifoddpage
\newpage{\ }\thispagestyle{empty}
\fi

\end{document}